\newtheorem{theorem}{Theorem}[section]
\newtheorem{lemma}{Lemma}
\newtheorem{problem}{Problem}
\newtheorem{proposition}{Proposition}
\newtheorem{remark}{Remark}
\def\RR{\mathbb{R}}
\def\NN{\mathbb{N}}
\def\SS{\mathbb{S}}
\def\D{\mathrm{d}}
\title{Schoenberg coefficients and curvature at the origin of continuous isotropic positive definite kernels on spheres}
\author{Ahmed Arafat\footnote{\texttt{ah.arafat@mans.edu.eg}, Department of Mathematics, Mansoura University, Mansoura, Egypt} 
\and 
Pablo Gregori\footnote{\texttt{gregori@uji.es}, Instituto Universitario de Matem\'{a}ticas y Aplicaciones de Castell\'{o}n, Departamento de Matem\'{a}ticas, Universitat Jaume I de Castell\'{o}n, Campus Riu Sec, E-12071, Castell\'{o}n, Spain} 
\and 
Emilio Porcu\footnote{\texttt{emilio.porcu@newcastle.ac.uk}, Chair of Spatial Analytics Methods, School of Mathematics, Statistics and Physics, Newcastle University, UK} }
\begin{document}
\maketitle

\begin{abstract}
We consider the 
class $\Psi_d$ 
of continuous functions $\psi \colon [0,\pi] \to \mathbb{R}$, with $\psi(0)=1$ such that the associated isotropic 
kernel
$C(\xi,\eta)= \psi(\theta(\xi,\eta))$ ---with $\xi,\eta \in \mathbb{S}^d$ and $\theta$ the geodesic distance--- is positive definite on the product of two $d$-dimensional spheres $\mathbb{S}^d$. 
We face Problems 1 and 3 proposed in the essay \cite{gneiting_2013b}.
We have considered an extension that encompasses the solution of Problem 1 solved in \cite{fiedler_2013}, regarding the expression of the $d$-Schoenberg coefficients of members of $\Psi_d$ as combinations of $1$-Schoenberg coefficients. We also give expressions for the computation of Schoenberg coefficients of the exponential and Askey families for all even dimensions through recurrence formula. Problem 3 regards the curvature at the origin of members of $\Psi_d$ of local support. We have improved the current bounds for determining this curvature, which is of applied interest at least for $d=2$. 

\end{abstract}

\noindent
\emph{Keywords:} Positive definite kernel; Schoenberg coefficients; Gegenbauer polynomials; Isotropic covariance function

\section{Introduction}\label{S:1}

There has been a fervent research activity around positive definite functions on spheres in the last five years \cite{Barbosa2015,
Beatson201722,
Castro201293,
estrade_2016,
fiedler_2013,
gneiting_2013a,
Guella2016286,
Guella201891,
Guella2016671,
Guella2016,
Guella2018150,
Peron_2016,
Menegatto2014189,
porcu_2016,
Trübner20173017,
Xu20182039,
ziegel_2014}. 
In particular, \cite{gneiting_2013a} offers an impressive overview on the problem as well as a number of connections between mathematical, complex and harmonic analysis, as well as approximation theory, with the theory of stochastic processes, Gaussian random fields, and geostatistics. 

Schoenberg's theorem \cite[Thm.~2]{schoenberg_1942}, in concert with the orthonormality properties of spherical harmonics, imply that a very natural assumption on positive definite functions over $d$-dimensional spheres of $\RR^{d+1}$ is that they depend on the geodesic (great circle) distance between any two points located over the $d$-dimensional spherical shell. Such an assumption is known as \textit{geodesic isotropy} and it is the building block for more sophisticated constructions, such as in \cite{berg_2016,estrade_2016} and \cite{porcu_2016}. More technical approaches based on complex spheres and locally compact groups have been proposed in \cite{berg_2016b}.

\cite{gneiting_2013b} culminates in a collection of open problems that have inspired mathematicians and statisticians, and we cite the works \cite{fiedler_2013,berg_2016,Peron_2016,ziegel_2014} and the \textit{tour de force} in \cite{beatson_2013}.

This paper faces two important problems, the former being related to the representation of the $d$-Schoenberg's coefficients (see Section \ref{sec2} below) in terms of $1$-Schoenberg coefficients. Such a problem is parenthetical to the celebrated Matheron's turning bands operator \cite{matheron_1963} proposed in Euclidean space only. In particular, a representation of the $d$-Schoenberg coefficients in terms of $1$- Schoenberg's coefficients (see subsequent sections for details) was provided by \cite{fiedler_2013} when $d$ is odd, and in terms of $2$-Schoenberg's coefficients when $d$ is even. The case of even dimension $d$ and a representation in terms of $1$-Schoenberg's coefficients is still elusive and constitutes one of the challenges of the present paper.

The latter problem finds instead motivation in atmospheric data assimilation, where locally supported isotropic correlation functions are used for the distance-dependent reduction of global scale covariance estimates in ensemble Kalman filter settings \cite{buehner_2007, hamill_2001}.

We culminate our findings by proposing closed forms of the $2$-Schoenberg's coefficients related to celebrated families of positive definite functions on spheres. One of them means an improvement, over the interesting expression found in \cite[p.729]{huang_zhang_robeson_2011}, with respect to the numerical computation, because we turn an infinite series into a finite sum.

The plan of the paper is the following. Section \ref{sec2} provides the necessary concepts, notation and theoretical tools. Section \ref{sec3} introduces the statements of problems 1 and 3 of \cite{gneiting_2013b} and follows with our improvements to their current solutions. Section \ref{sec:coefficients-special-families} includes closed form expressions for the $2$-Schoenberg coefficients of correlation functions in the exponential and Askey families. 

\section{The class $\Psi_d$ and $d$-Schoenberg coefficients}
\label{sec2}

This section is largely expository and details the necessary material needed for a self contained exposition. Let $d$ be a positive integer. We consider the $d$-dimensional sphere $\SS^{d}$ with unit radius, embedded in $\RR^{d+1}$ so that $\SS^d= \{x \in \RR^{d+1} : \|x\|=1 \}$. We define the \textit{geodesic} or \textit{great circle} distance as the mapping $\theta \colon \SS^d \times \SS^d \to [0,\pi]$ defined through $\theta(\xi,\eta)= \arccos(\langle \xi,\eta \rangle)$, with $\langle \cdot,\cdot \rangle$ denoting the classical dot product. Throughout, we shall be sloppy whenever using the abuse of notation $\theta$ for $\theta(\xi,\eta)$. We also consider the Hilbert sphere $\SS^{\infty}= \{x \in \RR^{\mathbb{N}}: \|x\|=1 \}$. We say that the function $C \colon \mathbb{S}^d\times\mathbb{S}^d\rightarrow\mathbb{R}$ is positive definite if 
\begin{equation*}
\sum_{i,j=1}^n\alpha_i\alpha_j C(\mathbf{x}_i,\mathbf{x}_j)\geq0,
\end{equation*}
for any $\alpha_1, \dots, \alpha_n\in\mathbb{R}$ and for every $\mathbf{x}_1, \dots, \mathbf{x}_n\in\mathbb{S}^d$.

We denote by $C_n^{\lambda}$ the $n$-th Gegenbauer polynomial of order $\lambda>0$, uniquely identified through the intrinsic relation 

\[
\frac{1}{\left ( 1+r^2-2r \cos \theta \right )^{\lambda}}= \sum_{n=0}^{\infty} r^n C_{n}^{\lambda} (\cos \theta), \qquad \theta \in [0,\pi],
\] 
where $r \in (-1,1)$. It is of fundamental importance that \cite[Eq.~18.14.4]{NIST:DLMF}
\begin{equation} \label{value-at-1}
| C_{n}^{\lambda} (x) | \le \frac{\Gamma(n+2\lambda)}{n! \Gamma(2 \lambda)}=C_{n}^{\lambda}(1) , \qquad x \in [-1,1].
\end{equation}



The trigonometric expansion in the following lemma is crucial for our first solution. We recall the notation of the \emph{rising factorial} $(x)_m := x (x+1) \cdots (x+m-1)$ for any real number $x$ and any non negative integer length $m$, with the convention $(x)_0=1$.

\begin{lemma}\label{lemma21}
Let $n \geq 1$ be an integer, $\lambda > 0$ and $0 < \theta < \pi$. Then the expansion
	\begin{equation}\label{24}
	(\sin\theta)^{2\lambda-1} C_n^{\lambda}(\cos\theta)= 
	\frac{2^{2-2\lambda}\Gamma(n+2\lambda)}{\Gamma(\lambda)\Gamma(n+\lambda+1) }
	\sum_{\mu=0}^{\infty} \frac{(1-\lambda)_\mu (n+1)_\mu}{\mu! (n+\lambda+1)_\mu} \sin(n+2\mu+1)\theta.
	\end{equation}
holds, and reduces to a finite sum (up to $\mu=\lambda -1$) whenever $\lambda$ is integer.
\end{lemma}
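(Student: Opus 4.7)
My strategy is to interpret the right-hand side of \eqref{24} as a Fourier sine series on $(0,\pi)$ and to compute directly the Fourier sine coefficients of the left-hand side, verifying that they agree with the stated formula.

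First fix $\lambda>1/2$, so that $f(\theta):=(\sin\theta)^{2\lambda-1}C_n^\lambda(\cos\theta)$ is continuous on $[0,\pi]$ and vanishes at the endpoints; then $f$ equals, pointwise on $(0,\pi)$, the sum of its uniformly convergent Fourier sine series $\sum_{k\ge 1}b_k\sin(k\theta)$ with $b_k=\tfrac{2}{\pi}\int_0^\pi f(\theta)\sin(k\theta)\,\D\theta$. The substitution $x=\cos\theta$, combined with the identity $\sin(k\theta)=\sqrt{1-x^2}\,U_{k-1}(x)=\sqrt{1-x^2}\,C_{k-1}^{1}(x)$ (where $U_{k-1}$ denotes the Chebyshev polynomial of the second kind), recasts this as
\[
b_k \;=\; \frac{2}{\pi}\int_{-1}^{1}(1-x^2)^{\lambda-1/2}\,C_n^\lambda(x)\,C_{k-1}^{1}(x)\,\D x.
\]

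The key step is to expand $C_{k-1}^{1}$ in the basis $\{C_m^\lambda\}_{m\ge 0}$, orthogonal on $[-1,1]$ against the weight $(1-x^2)^{\lambda-1/2}$, via the classical Gegenbauer--Gegenbauer connection formula; orthogonality then selects only the summand with $m=n$, forcing $n=k-1-2\ell$, so $b_k=0$ unless $k=n+1+2\mu$ for some $\mu\ge 0$ (with $\ell=\mu$). Plugging in the explicit connection coefficient together with the standard norm $\|C_n^\lambda\|^2=\pi\,2^{1-2\lambda}\Gamma(n+2\lambda)/[n!(n+\lambda)\Gamma(\lambda)^2]$, and using the Pochhammer identities $(n+\mu)!/n!=(n+1)_\mu$ and $\Gamma(n+\lambda+\mu+1)/\Gamma(n+\lambda+1)=(n+\lambda+1)_\mu$, I expect to recover exactly the coefficient displayed in \eqref{24}.

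The remaining range $0<\lambda\le 1/2$ then follows by analytic continuation in $\lambda$, since for each fixed $\theta\in(0,\pi)$ both sides are analytic on $\{\mathrm{Re}\,\lambda>0\}$ --- for the right-hand side, Abel summation together with the Dirichlet-kernel bounds the tail uniformly on compact subsets. When $\lambda\in\NN$, the Pochhammer factor $(1-\lambda)_\mu$ vanishes for every $\mu\ge\lambda$, which produces the advertised truncation at $\mu=\lambda-1$.

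The main obstacles I anticipate are (i) locating and correctly applying the right form of the Gegenbauer-to-Gegenbauer connection coefficients --- this is where the factor $(1-\lambda)_\mu$ ultimately enters --- and (ii) the Gamma-function bookkeeping needed to recast the surviving Pochhammer product into the symmetric form shown in \eqref{24}.
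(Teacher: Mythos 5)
Your proposal is correct in outline, and I verified the key computation: with the Gegenbauer--Gegenbauer connection formula specialised to $\beta=1$, orthogonality selects $k=n+2\mu+1$, and the product of the connection coefficient $\tfrac{(n+\lambda)\Gamma(\lambda)\,(1-\lambda)_\mu\,(n+\mu)!}{\mu!\,\Gamma(n+\mu+\lambda+1)}$ with $\tfrac{2}{\pi}\|C_n^\lambda\|^2$ reproduces exactly the coefficient in \eqref{24}. However, your route is genuinely different from the paper's. The paper does not rederive the expansion at all for non-integer $\lambda$: it simply cites Szeg\H{o} (Eq.~4.9.22), which states the identity for $\lambda>0$, $\lambda\neq 1,2,\dots$, and then closes the remaining gap --- integer $\lambda$ --- by induction on $\lambda$, using the base case $C_n^1(\cos\theta)=\sin((n+1)\theta)/\sin\theta$, the recurrence $2\lambda\sin^2\theta\,C_n^{\lambda+1}(\cos\theta)=(n+2\lambda+1)\cos\theta\,C_{n+1}^\lambda(\cos\theta)-(n+2)C_{n+2}^\lambda(\cos\theta)$, and product-to-sum identities to verify that the finite sums match. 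Your approach is self-contained and uniform: it proves the identity directly for all $\lambda>1/2$ (integers included, where $(1-\lambda)_\mu$ kills the tail automatically), at the price of importing the connection-coefficient formula and of an analytic-continuation argument for $0<\lambda\le 1/2$, where the series is no longer absolutely convergent and you must control it via Abel summation --- a step you correctly flag but should execute carefully (the coefficient differences decay like $\mu^{-2\operatorname{Re}\lambda-1}$, which is what makes the continuation work). The paper's route avoids all convergence subtleties by delegating them to Szeg\H{o}, but its inductive verification is a somewhat opaque bookkeeping exercise; yours explains \emph{why} the factor $(1-\lambda)_\mu$ appears. One small caution: the usual statement of the connection formula carries a $\Gamma(\beta-\lambda)$ in a denominator, which is singular for integer $\lambda$; you must (as you implicitly do) absorb it into the Pochhammer symbol $(1-\lambda)_\mu$ before specialising.
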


\begin{proof}
\cite[p.~93, Eq.~4.9.22]{szego_1939} states the expansion for $\lambda>0$, $\lambda \neq 1, 2, 3,\dots$ and $0 < \theta < \pi$. For the remaining case, we prove it by induction on $\lambda \in \{ 1, 2,\ldots \}$. For simplicity, let us denote $\gamma_{\lambda, n} := \frac{2^{2-2\lambda} \Gamma( n+2\lambda )}{\Gamma( \lambda ) \Gamma( n+\lambda+1 )}$ and $\beta_{\lambda,n,\mu} := \frac{( 1-\lambda )_\mu ( n+1 )_\mu}{ \mu! ( n+\lambda+1 )_\mu}$, and note that $\gamma_{1,n} = 1$ for all $n$ and $\beta_{\lambda,n,0} =1$ for any $\lambda$ and $n$.

To begin with, \cite[18.5.2]{NIST:DLMF} shows that $C_n^1(\cos \theta) = \sin((n+1)\theta) / \sin \theta$, hence Eq.~\eqref{24} holds for $\lambda=1$ and all $n$. Now, for $\lambda = k \geq 1$ we shall use the recurrence relation adapted from \cite[Eq.~4.7.27]{szego_1939}
$$
2 \lambda \sin^2 \theta C_n^{\lambda+1} (\cos \theta) = (n+2\lambda+1) \cos \theta C_{n+1}^\lambda (\cos \theta) - (n+2) C_{n+2}^\lambda (\cos \theta),
$$
the induction hypothesis (Eq.~\ref{24}), and the product-to-sum trigonometric identities, in order to prove Eq.~\ref{24} for $\lambda = k+1$ and all $n$, as follows:

\begin{multline*}
(\sin \theta)^{2(k+1)-1} C_{n}^{k+1}( \cos \theta ) =
\frac{(\sin \theta)^{2k-1}}{2k} 2 k \sin^2 \theta C_n^{k+1}( \cos \theta ) \hfill \\
= \frac{n+2k+1}{2k} (\sin \theta)^{2k-1} C_{n+1}^k( \cos \theta ) \cos \theta -  \frac{n+2}{2k} (\sin \theta)^{2k-1} C_{n+2}^k(\cos \theta )
\end{multline*}

\begin{multline*}
= \frac{n+2k+1}{2k} \gamma_{k, n+1} \sum_{\mu=0}^{k-1} \beta_{k, n+1, \mu} \sin( n+2\mu+2) \theta \cos \theta \\
\hfill - \frac{n+2}{2k} \gamma_{k,n+2} \sum_{\mu=0}^{k-1} \beta_{k, n+2, \mu} \sin( n+2\mu+3) \theta
\end{multline*}

\begin{multline*}
= \frac{n+2k+1}{4k} \gamma_{k, n+1} \sum_{\mu=0}^{k-1} \beta_{k, n+1, \mu} [ \sin( n+2\mu+3) \theta + \sin( n+2\mu+1) \theta ] \\
\hfill - \frac{n+2}{2k} \gamma_{k,n+2} \sum_{\mu=1}^{k} \beta_{k, n+2, \mu-1} \sin( n+2\mu+1) \theta
\end{multline*}

\begin{multline*}
= \gamma_{k+1, n} \sum_{\mu=1}^{k} \beta_{k, n+1, \mu-1} \sin( n+2\mu+1) \theta
+ \gamma_{k+1, n} \sum_{\mu=0}^{k-1} \beta_{k, n+1, \mu} \sin( n+2\mu+1) \theta \\
\hfill - \frac{2(n+2)}{n+k+2} \gamma_{k+1,n} \sum_{\mu=1}^{k} \beta_{k, n+2, \mu-1} \sin( n+2\mu+1) \theta \\
= \gamma_{k+1,n} \left[ \beta_{k, n+1, 0} \sin( n+1) \theta \phantom{\sum_{\mu=1}^{k}} \right. \hfill \\
+ \sum_{\mu=1}^{k} [ \beta_{k, n+1, \mu-1} + \beta_{k, n+1, \mu} - \frac{2(n+2)}{n+k+2} \beta_{k, n+2, \mu-1} ]  \sin( n+2\mu+1) \theta \\
\hfill \left. \phantom{\sum_{\mu=1}^{k}} + [ \beta_{k, n+1, k-1} - \frac{2(n+2)}{n+k+2} \beta_{k, n+2, k-1} ]  \sin( n+2\mu+1) \theta \right]
\end{multline*}

\begin{multline*}
= \gamma_{k+1, n} \sum_{\mu=0}^{k} \beta_{k+1, n, \mu} \sin( n+2\mu+1) \theta \hfill
\end{multline*}
and the proof is complete, after the last step is thoroughly checked.

\end{proof}


Let  $\Psi_d$ be the class of continuous mappings $\psi \colon [0,\pi] \to \RR$ with $\psi(0)=1$ such that the continuous functions $C \colon \SS^d \times \SS^d \to \RR$ defined through $C(\xi,\eta)= \psi(\theta(\xi,\eta))$ are positive definite. The dimension $d$ and the parameter $\lambda$ are related by $\lambda := \frac{d-1}{2}$, and in the sequel, and for ease of notation, we shall use one or the other interchangeably. \cite{schoenberg_1942} characterized the positive definite functions defined on the spheres of any dimension.

\begin{theorem}\cite{schoenberg_1942}\label{Th:1}
	A necessary and sufficient condition for a continuous mapping $\psi \colon [0,\pi] \to \RR $, with $\psi(0)=1$ to belong to the class $\Psi_d$ is that the ultraspherical expansion
	\begin{equation}\label{eq:schoenberg-expansion}
	\sum_{n=0}^\infty\left\{ \frac{(n+\lambda) \Gamma(\lambda)}{\Gamma(\lambda+\frac{1}{2}) \Gamma(\frac{1}{2})} \frac{(n+1) \Gamma(2\lambda)}{\Gamma(n + 2 \lambda)} \cdot\int_0^\pi C_n^\lambda( \cos \theta' ) \psi(\theta') \sin^{2\lambda} \theta' \D \theta' \right\} C_n^\lambda( \cos \theta)
	\end{equation}
	has non-negative coefficients and converges absolutely and uniformly to $\psi(\theta)$ throughout $0\leq\theta\leq\pi$.
\end{theorem}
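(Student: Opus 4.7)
The plan is to prove both directions by coupling two classical facts: the orthogonality of the Gegenbauer polynomials $\{C_n^\lambda\}_{n\ge 0}$ in $L^2([-1,1],(1-x^2)^{\lambda-1/2}\D x)$, and the addition theorem for spherical harmonics, which states that for any orthonormal basis $\{Y_{n,k}\}_{k=1}^{N(n,d)}$ of degree-$n$ spherical harmonics on $\SS^d$,
\[
C_n^\lambda(\langle\xi,\eta\rangle) \;=\; \frac{|\SS^d|\, C_n^\lambda(1)}{N(n,d)} \sum_{k=1}^{N(n,d)} Y_{n,k}(\xi)\,\overline{Y_{n,k}(\eta)}, \qquad \xi,\eta \in \SS^d.
\]
The prefactor in (\ref{eq:schoenberg-expansion}) is nothing but the reciprocal of the Gegenbauer normalization constant $h_n^\lambda := \int_{-1}^{1} (C_n^\lambda(x))^2 (1-x^2)^{\lambda-1/2}\D x$, multiplied by a weight arising from the surface measure on $\SS^d$.

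\emph{Sufficiency.} The addition theorem exhibits each map $(\xi,\eta)\mapsto C_n^\lambda(\cos\theta(\xi,\eta))$ as a non-negative multiple of $\sum_k Y_{n,k}(\xi)\overline{Y_{n,k}(\eta)}$, hence as a positive definite kernel on $\SS^d$. Any non-negative combination is again positive definite, so (\ref{eq:schoenberg-expansion}) defines a positive definite kernel provided it converges. Writing $b_n$ for the coefficient inside braces, evaluation at $\theta=0$ together with $\psi(0)=1$ yields $\sum_{n} b_n C_n^\lambda(1) \le 1$; combined with the bound (\ref{value-at-1}), the Weierstrass M-test delivers absolute and uniform convergence on $[0,\pi]$.

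\emph{Necessity.} Let $\psi\in\Psi_d$ and set $C(\xi,\eta):=\psi(\theta(\xi,\eta))$. Define $b_n$ by the brace in (\ref{eq:schoenberg-expansion}); after $x=\cos\theta$ this is proportional to the Fourier--Gegenbauer coefficient $\langle\psi\circ\arccos,C_n^\lambda\rangle_{w}/h_n^\lambda$ with weight $w(x)=(1-x^2)^{\lambda-1/2}$. The Funk--Hecke formula rewrites each such one-dimensional integral as a double surface integral on $\SS^d\times\SS^d$: for any fixed $k$,
\[
b_n \;=\; c_{n,d} \int_{\SS^d}\!\int_{\SS^d} C(\xi,\eta)\, Y_{n,k}(\xi)\overline{Y_{n,k}(\eta)}\,\D\sigma(\xi)\,\D\sigma(\eta),
\]
with $c_{n,d}>0$ an explicit constant. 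Approximating this double integral by Riemann sums and invoking the pointwise positive definiteness of $C$ together with continuity gives non-negativity of the right-hand side, hence $b_n \ge 0$. The convergence of (\ref{eq:schoenberg-expansion}) now follows from the very same Weierstrass M-test argument as in the sufficiency direction, and its uniformly convergent limit agrees with $\psi$ because $\{C_n^\lambda\}_{n\ge 0}$ is complete in $L^2([-1,1],w\,\D x)$ and both functions share the same Fourier--Gegenbauer coefficients.

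\emph{Main obstacle.} The delicate point is the necessity direction: one must promote the discrete pointwise inequality defining positive definiteness to the integral inequality on $\SS^d\times\SS^d$ against the possibly signed density $Y_{n,k}(\xi)\overline{Y_{n,k}(\eta)}$, and then verify that the explicit prefactor in (\ref{eq:schoenberg-expansion}) is exactly the product of $1/h_n^\lambda$ with the dimensional constant emerging from Funk--Hecke and the addition theorem. This bookkeeping of normalizations is routine but error-prone; the conceptual core of the theorem is really the combination of orthogonality with the positive definiteness of $C_n^\lambda(\cos\theta(\cdot,\cdot))$ supplied by the addition formula.
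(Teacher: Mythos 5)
The paper itself offers no proof of this statement: it is quoted directly from Schoenberg's 1942 article, so your argument has to be judged on its own terms. Your overall strategy --- the addition theorem for spherical harmonics to get positive definiteness of each $C_n^\lambda(\cos\theta(\cdot,\cdot))$ in the sufficiency direction, and Funk--Hecke together with Riemann-sum approximation of the double surface integral to get $b_n\ge 0$ in the necessity direction, followed by orthogonality and completeness to identify the limit --- is the standard and essentially correct route, and the sufficiency direction is unproblematic because there the absolute and uniform convergence to $\psi$ is part of the hypothesis.

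There is, however, a genuine gap in the necessity direction, exactly at the step ``the convergence of (\ref{eq:schoenberg-expansion}) now follows from the very same Weierstrass M-test argument as in the sufficiency direction.'' That M-test argument rests on evaluating the series at $\theta=0$ to conclude $\sum_n b_n C_n^\lambda(1)\le 1$, but in the necessity direction you do not yet know that the series converges at $\theta=0$, let alone that its sum there equals $\psi(0)$; you are using the conclusion to establish itself. The classical repair is a summability step: the Ces\`aro means (of sufficiently high order, depending on $\lambda$) of the ultraspherical expansion of a continuous function converge uniformly to that function, hence converge to $1$ at $\theta=0$; since you have already shown $b_n\ge 0$ and $C_n^\lambda(1)>0$, the elementary Tauberian fact that a Ces\`aro-summable series with non-negative terms is convergent yields $\sum_n b_n C_n^\lambda(1)=1<\infty$. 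Only after this does the bound (\ref{value-at-1}) combined with the M-test deliver absolute and uniform convergence, and the completeness of $\{C_n^\lambda\}$ in the weighted $L^2$ space, together with continuity, identifies the sum with $\psi$. (An alternative repair is Mercer's theorem applied to the continuous positive definite kernel $C$, whose eigenfunctions are the spherical harmonics by isotropy and Funk--Hecke.) Without one of these ingredients the necessity half of your proof is incomplete.
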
 

\cite{gneiting_2013a} used Theorem \ref{Th:1} to characterize the members of class $\Psi_d$ through the representation
\begin{equation} \label{eq:gneiting-expansion}
\psi(\theta)= \sum_{n=0}^{\infty} b_{n,d} \frac{C_{n}^{\lambda}(\cos \theta)}{C_{n}^{\lambda}(1)}, \qquad \theta \in [0,\pi],
\end{equation}
with $\{ b_{n,d} \}_{n=0}^\infty $ being a uniquely identified probability mass system. We follow \cite{daley_2014} and \cite{ziegel_2014} when referring to $b_{n,d}$ as \emph{$d$-Schoenberg coefficients}. 

The classes $\Psi_d$ are nested, with the inclusion relation 
$$ \Psi_1 \supset \Psi_2 \supset \cdots \supset \Psi_{\infty}:= \bigcap_{d \ge 1} \Psi_d $$
being strict, and where $\Psi_{\infty}$ has as direct relation to the Hilbert sphere as previously defined. 

\cite{gneiting_2013a} and \cite{beatson_2013} obtain recurrent formulas in order to write coefficient $b_{n,d}$ as a linear combination of $b_{n, d-2}$ and $b_{n+2, d-2}$. By applying recursivity, each coefficient $b_{n,d}$ can be finally written, when $d$ is odd, as a linear combination of Schoenberg coefficients in the circle, $\{b_{n+2k,1}\}_{k=0}^{\lfloor d/2 \rfloor}$, and when $d$ is even, as a linear combination of Schoenberg coefficients in the sphere, $\{b_{n+2k,2}\}_{k=0}^{d/2}$.

By orthogonality of the Gegenbauer polynomials, we can identify coefficients of Eq. \eqref{eq:schoenberg-expansion} and \eqref{eq:gneiting-expansion} and get \cite[Cor.~2]{gneiting_2013a}
\begin{equation}\label{2.5}
b_{n,d}=\frac{(n+\lambda)\Gamma(\lambda)}{\Gamma(\lambda+1/2)\Gamma(1/2)}\int_0^\pi C_n^{\lambda}(\cos\theta)\psi(\theta)(\sin\theta)^{2\lambda}\D\theta,
\end{equation}
where as usual $\lambda:=(d-1)/2$.

We recall that $1$-Schoenberg coefficients of are the Fourier coefficients for even functions:
\begin{equation} \label{eq:coeff-real-line}
b_{0,1} := \frac{1}{\pi} \int_0^\pi \psi(\theta) \D \theta, \qquad b_{n,1} := \frac{2}{\pi} \int_0^\pi \psi(\theta) \cos (n\theta) \D \theta, \qquad (n \geq 1).
\end{equation}

\section{Gneiting's problems and current solutions}
\label{sec3}

\subsection{Statements of the problems}

We now expose the problems faced in the paper together with their partial solutions. 
\begin{problem}\cite[Problem 1]{gneiting_2013b}
	Let $n\ge0$ and $k \ge 1$ be integers. Find the coefficients $a_{n,1},\ldots,a_{n,k}$ in the expansion
	\begin{equation*}
	b_{n,2k+1}= \sum_{i=0}^k a_{n,i} b_{n+2i,1}
	\end{equation*}
	associated to the $(2k+1)$-Schoenberg coefficients in terms of Fourier coefficients $b_{n,1}, \ldots$, $b_{n+2k,1}$. Similarly, find the $(2k+2)$-Schoenberg coefficients in terms of the $2$-Schoenberg coefficients $b_{n,2},b_{n+2,2}, \ldots,b_{n+2k,2}$.
\end{problem}

In order to state Problem $2$, we follow \cite{gneiting_2013a} when calling $\Psi_{d}^{c}$ the subclass of $\Psi_d$ having members $\psi$ that vanish for any $\theta \ge c$, with $c \in (0,\pi]$. When $c< \pi$, then any member of $\Psi_d^{c}$ is called \emph{locally supported}, otherwise it is called \emph{globally supported}.

\begin{problem}\cite[Problem 3]{gneiting_2013b}
	For an integer $d \ge 1$, and for a given $c \in (0,\pi]$, find 
	\begin{equation} \label{curvature}
	a_{d}^c: = \inf_{\psi \in \Psi_d^c} \left ( - \psi^{''}(0) \right ). 
	\end{equation} 
\end{problem}

This is a problem of applied interest when $d = 2$. In atmospheric data assimilation, locally supported isotropic correlation functions are used for the distance-dependent reduction of global scale covariance estimates in ensemble Kalman filter settings (see \cite{gneiting_2013a} and the references therein). Thus, it is appealing to use a member of the class $\Psi_2^c$ with minimal curvature at the origin.

Some comments are in order. The solution of Problem $1$ requires the use of recursive formulae for the Gegenbauer polynomials and a constructive argument that will be exposed subsequently. An approach of Problem $2$ relies on considering $\widetilde{\Psi}_d^c$, the subclass of $\Psi_d$ given by those members $\psi \in \Psi_d$ such that $\psi(c)= 0$. Clearly, we have
\begin{equation}\label{increasion}
\Psi_{d}^c \subset \widetilde{\Psi}_d^c \subset \Psi_d,
\end{equation}
with the inclusion relation being strict. The definition of the class $\widetilde{\Psi}_d^c$ in concert with Schoenberg's representation and the oscillatory nature of Gegenbauer polynomial implies that, for any member of the class $\Psi_d$, there exists a collection of members $\widetilde{\psi}$ of the class $\Psi_d^{c_k}$, for $ \{ c_k\} $ being a sequence of constants with $c_k \in (0,\pi]$, such that 
$$ \psi(\theta) = \sum_{k=0}^{\infty} b_{k,d} \widetilde{\psi}_k(\theta), \qquad \theta \in [0,\pi]. $$

Another relevant comment is that Theorems 2 and 3 in \cite{gneiting_2013a} provide the upper bound $a_d^c \leq  \frac{1}{c^2} \frac{4}{d} j_{\frac{d-2}{2}}^2$,
where $j_\nu$ denotes the first positive zero of the Bessel function $J_\nu$. Some of these zeros are:
$$
j_0 \approx 2.4048, \quad j_{0.5}\approx\pi\quad j_1 \approx 3.8317.
$$
According to \cite{ehm_2004} the constant $a_d^c$ in Euclidean spaces depend on Boas-Kac roots, but \cite{ziegel_2014} showed that the convolution root does not always exist for positive definite functions on spheres. This makes the problem mathematically more interesting, and certainly tricky. 

\subsection{Main results}

\begin{proposition}
	Let $d > 1$ be an integer, and let $\lambda := (d-1)/2$. Then,
	\begin{multline} \label{fiedler}
	b_{n,d} = \frac{\sqrt{\pi} \Gamma(n + 2 \lambda)}{2^{2\lambda} \Gamma(\lambda+1/2) \Gamma(n+\lambda)}
	\left[
	b_{n,1}^* \phantom{\sum_{mu=1}^\infty} \right. \\ \left. - 
	\lambda 
	\sum_{\mu=1}^\infty 
	\frac{(1-\lambda)_{\mu-1} (n+1)_{\mu-1} (n+2\mu)}{\mu! (n+\lambda+1)_\mu}
	b_{n+2\mu, 1}
	\right]
	\end{multline}
	for $n \geq 0$, where $b_{n,1}^* = b_{n,1}$ when $n \geq 1$ and $b_{0,1}^* = 2 b_{0,1}$. If $d$ is odd, the expression involves only a finite number of coefficients, i.e., $b_{n,1}^*, b_{n+2,1}, \ldots, b_{n+2\lambda,1}$.
\end{proposition}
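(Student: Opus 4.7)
The plan is to expand the integral formula \eqref{2.5} for $b_{n,d}$ via Lemma~\ref{lemma21}, convert the resulting $\sin$-$\sin$ products into cosine integrals by product-to-sum identities, identify these with the $1$-Schoenberg (Fourier) coefficients via~\eqref{eq:coeff-real-line}, and finally collapse the two-term series into a single sum over $b_{n+2\mu,1}$ by an index shift. First I would factor $(\sin\theta)^{2\lambda}=\sin\theta\cdot(\sin\theta)^{2\lambda-1}$ in \eqref{2.5} and insert Lemma~\ref{lemma21}; the interchange of sum and integral is trivial for integer $\lambda$ (finite sum) and otherwise follows from dominated convergence, with the uniform bound \eqref{value-at-1} controlling the tails. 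This yields
\[
b_{n,d}=\frac{(n+\lambda)\Gamma(\lambda)}{\Gamma(\lambda+1/2)\Gamma(1/2)}\,\gamma_{\lambda,n}\sum_{\mu\geq 0}\beta_{\lambda,n,\mu}\int_0^\pi \sin((n+2\mu+1)\theta)\sin\theta\,\psi(\theta)\,\D\theta,
\]
with $\gamma_{\lambda,n}$ and $\beta_{\lambda,n,\mu}$ as defined in the proof of Lemma~\ref{lemma21}.

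Next, applying $2\sin A\sin B=\cos(A-B)-\cos(A+B)$ together with \eqref{eq:coeff-real-line} reduces each integral to $(\pi/4)[b_{n+2\mu,1}^{*}-b_{n+2\mu+2,1}^{*}]$; the asterisk encodes the fact that when $n=\mu=0$ the cosine integral becomes $\pi b_{0,1}$ rather than $(\pi/2)b_{0,1}$, so $b_{0,1}^{*}=2b_{0,1}$ and $b_{k,1}^{*}=b_{k,1}$ otherwise. Shifting $\mu\mapsto\mu-1$ in the second piece and using $\beta_{\lambda,n,0}=1$ rewrites the bracket as $b_{n,1}^{*}+\sum_{\mu\geq 1}(\beta_{\lambda,n,\mu}-\beta_{\lambda,n,\mu-1})b_{n+2\mu,1}$. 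A direct rising-factorial computation, clearing denominators to $\mu!\,(n+\lambda+1)_{\mu}$, gives
\[
\beta_{\lambda,n,\mu}-\beta_{\lambda,n,\mu-1}=\frac{(1-\lambda)_{\mu-1}(n+1)_{\mu-1}}{\mu!\,(n+\lambda+1)_{\mu}}\bigl[(\mu-\lambda)(n+\mu)-\mu(n+\lambda+\mu)\bigr],
\]
and the bracket collapses algebraically to $-\lambda(n+2\mu)$, which is exactly the coefficient appearing in \eqref{fiedler}.

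Finally, I would consolidate the scalar prefactor $(n+\lambda)\Gamma(\lambda)\gamma_{\lambda,n}\cdot\pi/(4\Gamma(\lambda+1/2)\Gamma(1/2))$ using $\Gamma(n+\lambda+1)=(n+\lambda)\Gamma(n+\lambda)$ and $\Gamma(1/2)=\sqrt{\pi}$, which produces the stated constant $\sqrt{\pi}\,\Gamma(n+2\lambda)/(2^{2\lambda}\Gamma(\lambda+1/2)\Gamma(n+\lambda))$. For odd $d$ the parameter $\lambda=(d-1)/2$ is a positive integer, so $(1-\lambda)_{\mu-1}=0$ once $\mu>\lambda$: the series truncates at $\mu=\lambda$ and only $b_{n,1}^{*},b_{n+2,1},\ldots,b_{n+2\lambda,1}$ contribute. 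The main obstacle is the arithmetic simplification of $\beta_{\lambda,n,\mu}-\beta_{\lambda,n,\mu-1}$ (aligning the two shifted rising factorials so the telescoping is clean) together with the careful boundary bookkeeping that generates $b_{0,1}^{*}$; the sum-integral interchange for non-integer $\lambda$, although needed, is standard given \eqref{eq:schoenberg-expansion} and \eqref{value-at-1}.
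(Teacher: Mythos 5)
Your proposal is correct and follows essentially the same route as the paper: insert Lemma~\ref{lemma21} into \eqref{2.5}, convert to cosines via product-to-sum, identify Fourier coefficients with the special $n=0$ case, and simplify $\beta_{\lambda,n,\mu}-\beta_{\lambda,n,\mu-1}$ to $-\lambda(n+2\mu)$ times the stated rational factor. The only (minor) divergence is the justification of the sum--integral interchange for non-integer $\lambda$: the paper establishes absolute convergence of $\sum_\mu|\beta_{\lambda,n,\mu}-\beta_{\lambda,n,\mu-1}|$ directly by noting the general term is a ratio of polynomials in $\mu$ of degrees $n+1$ and $n+2\lambda+2$, whereas your appeal to dominated convergence via \eqref{value-at-1} is vaguer (that bound concerns Gegenbauer values, not the tail of the $\beta$-series) and would need the same degree-count estimate to be made rigorous.
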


\begin{proof}
By plugging Eq.~\eqref{24} into Eq.~\eqref{2.5}, taking into account Eq.~\eqref{value-at-1}, and using again the product-to-sum trigonometric identities, we get 
	\begin{align}\label{26}
	b_{n,d} &= \alpha_{\lambda,n} \int_0^\pi \left[ \sum_{\mu=0}^\infty \beta_{\lambda,n,\mu} \sin(n+2\mu+1)\theta \sin\theta \right] \psi(\theta) \D\theta \nonumber \\
	&=\frac{\alpha_{\lambda,n}}{2} \int_0^\pi \left[ \beta_{\lambda,n,0} \cos(n \theta) + \sum_{\mu=1}^\infty [\beta_{\lambda,n,\mu} - \beta_{\lambda,n,\mu-1}] \cos(n+2\mu)\theta \right] \psi(\theta) \D\theta
	\end{align}
	where $\alpha_{\lambda,n} := \frac{2^{2-2\lambda}\Gamma(n+2\lambda)}{\Gamma(\lambda+1/2)\Gamma(1/2) \Gamma(n+\lambda)}$ and $\beta_{\lambda,n,\mu} :=\frac{(1-\lambda)_\mu (n+1)_\mu}{\mu! (n+\lambda+1)_\mu}$. 
	If $\lambda$ is integer (i.e. $d$ is odd), the series is a finite sum (up to index $\mu=\lambda-1$). Otherwise, we need to assume the uniform convergence of the series in $(0, \pi)$, in order to exchange the integral and the series signs. In that case, we use the definitions of $1$-Schoenberg coefficients \eqref{eq:coeff-real-line}: for $n>0$ we get
$$
b_{n,d} = \frac{\pi \alpha_{\lambda,n}}{4} \left[ b_{n,1} + \sum_{\mu=1}^\infty [\beta_{\lambda,n,\mu} - \beta_{\lambda,n,\mu-1}] b_{n+2\mu,1} \right],
$$
while for the special case $n = 0$ we have
$$
b_{0,d} = \frac{\pi \alpha_{\lambda,0}}{4} \left[ 2 b_{0,1} + \sum_{\mu=1}^\infty [\beta_{\lambda,0,\mu} - \beta_{\lambda,0,\mu-1}] b_{2\mu,1} \right],
$$
since $\beta_{\lambda,n,0} = 1$ for all $n$ and $\lambda$. For $\mu = 1,2,3,\ldots$ we can simplify
	\begin{equation*}
	\beta_{\lambda,n,\mu}-\beta_{\lambda,n,\mu-1} = \frac{-\lambda(n+2\mu)}{\mu(n+\lambda+\mu)} \beta_{\lambda,n,\mu-1},
	\end{equation*}
and we only need to prove that the aforementioned convergence of the series present at Eq. \eqref{26} is uniform in $0 < \theta < \pi$. The boundedness of the cosine functions allows to prove the uniform convergence from the absolute convergence of the coefficients $\sum_{\mu=1}^\infty |\beta_{\lambda,n,\mu} - \beta_{\lambda,n,\mu-1} | = \sum_{\mu=1}^\infty \frac{\lambda(n+2\mu)}{\mu(n+\lambda+\mu)} |\beta_{\lambda,n,\mu-1}|$.
	A closer look at the definition of $\beta_{\lambda,n,\mu-1}$ reveals that cancellation occurs for $\lambda$ non integer (but $2\lambda$ integer by definition) and $\mu$ large enough:
$$
\beta_{\lambda,n,\mu-1} = \frac{(1 - \lambda) (2 - \lambda) \cdots (n - \lambda)}{1 \cdot 2 \cdots n} \cdot \frac{\mu(\mu+1) \cdots (\mu+n-1)}{(\mu - \lambda) (\mu - \lambda+1) \cdots (\mu + n + \lambda - 1)}.
$$
It is indeed a quotient of polynomials in $\mu$ (for each fixed $\lambda$ and $n$) of respective degrees $n$ and $n + 2\lambda$. Considering the extra factor in the series, its general term is a quotient of polynomials of respective degrees $n+1$ and $n + 2\lambda +2$, which turns it to be convergent since $d > 1$ (i.e. $2 \lambda > 0$). Simple simplifications lead to the final expression. If $d$ is odd, the infinite series is a finite sum indeed, and we do not need to invoke the uniform convergence argument.
\end{proof}
\begin{remark}
In case that $\lambda\in\NN$ ---i.e., $d$ is odd---, Eq.~\eqref{fiedler} coincides, after simplification, with the one of Theorem~2.1 in \cite{fiedler_2013}. It is worth mentioning that \cite{fiedler_2013} used induction in order to prove the expression of Schoenberg coefficients for even (\emph{resp.} odd) dimension with respect to coefficients in the sphere (\emph{resp.} circle), in contrast to our direct derivation. 
\end{remark}

We are now able to face Problem 2, where a formal statement for a partial solution is exposed in the following. 
\begin{proposition}\label{prop2}
	Let $d>1$ be an integer. Then:
	\begin{itemize}
		\item[{\rm (i)}]
		$a_d^c \geq \frac{1}{1-\cos c}$ if $c \in [\pi/2, \pi]$.
		\item[{\rm (ii)}]
		$a_d^c \geq \frac{(d+1) (\cos c) (2 - \cos c) + 1}{(1-\cos c) ((d+1) \cos c +1)}$ if $c \in [\arccos \sqrt{\frac{1}{d+1}}, \pi/2]$.
	\end{itemize}
\end{proposition}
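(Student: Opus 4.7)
The plan is to relax ``$\psi\equiv 0$ on $[c,\pi]$'' to the single condition $\psi(c)=0$, which enlarges the feasible class to $\widetilde\Psi_d^c\supseteq\Psi_d^c$, so that $a_d^c\ge\inf_{\widetilde\Psi_d^c}(-\psi''(0))$. On the relaxed class the argument reduces to a weighted sum of a scalar inequality in $n$.

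First I would translate $-\psi''(0)$ into Schoenberg coefficients. With $P_n(\theta):=C_n^\lambda(\cos\theta)/C_n^\lambda(1)$, differentiating twice via $(C_n^\lambda)'(x)=2\lambda\,C_{n-1}^{\lambda+1}(x)$ and evaluating at $\theta=0$ gives $P_n''(0)=-\lambda_n$ with $\lambda_n:=n(n+d-1)/d$. Termwise differentiation of Eq.~\eqref{eq:gneiting-expansion} (legitimate whenever $-\psi''(0)$ is finite) yields $-\psi''(0)=\sum_{n\ge 0} b_{n,d}\lambda_n$. For $\psi\in\widetilde\Psi_d^c$ one also has $\sum_n b_{n,d}=1$ and $\sum_n b_{n,d}P_n(c)=0$, so for any scalars $\mu,\nu$ satisfying $\mu+\nu P_n(c)\le\lambda_n$ for every $n\ge 0$, multiplying by $b_{n,d}$ and summing gives
\[
\mu\cdot 1+\nu\cdot 0\;\le\;\sum_{n\ge 0}b_{n,d}\lambda_n\;=\;-\psi''(0) .
\]
Thus $a_d^c\ge\mu$ for any such $(\mu,\nu)$.

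For part~(i) I would take $\mu=-\nu=1/(1-\cos c)$, reducing the key inequality to $1-P_n(c)\le\lambda_n(1-\cos c)$. Equality holds at $n=0,1$; for $n\ge 2$, the universal bound $|P_n(c)|\le 1$ from Eq.~\eqref{value-at-1}, together with $\lambda_n\ge\lambda_2=2(d+1)/d\ge 2$ and $1-\cos c\ge 1$ (which is exactly where $c\ge\pi/2$ enters), gives $\lambda_n(1-\cos c)\ge 2\ge 1-P_n(c)$.

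For part~(ii) I would force equality at $n=1$ and $n=2$, since the range $\cos c\in[0,1/\sqrt{d+1}]$ is precisely the region in which $P_1(c)=\cos c\ge 0$ and $P_2(c)=((d+1)\cos^2 c-1)/d\le 0$, so that the complementary primal two-atom distribution on $\{1,2\}$ has nonnegative weights. Solving the $2\times 2$ linear system with $\lambda_1=1$ and $\lambda_2=2(d+1)/d$ recovers $\mu$ as the stated bound and gives $\nu=-(d+2)/[(1-\cos c)((d+1)\cos c+1)]<0$. The constraint at $n=0$ reduces to $\mu+\nu=-(d+1)(1-\cos c)/[(d+1)\cos c+1]\le 0$, which is immediate. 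The main obstacle is the infinite family of inequalities at $n\ge 3$: after clearing denominators each is polynomial in $t=\cos c$, and for $n=3$ it factors as $\tfrac{d+3}{d}\bigl(2+(d-2)t-2(d+1)t^2+(d+2)t^3\bigr)$, nonnegative on $[0,1]$ because the cubic has derivative vanishing only at $t=(d-2)/(3(d+2))$ and $t=1$, starts at $2$ when $t=0$, and vanishes at $t=1$. For general $n\ge 3$ the trivial $|P_n(c)|\le 1$ is insufficient (it would only force $\mu-\nu\le\lambda_n$, which fails already for $d\ge 3$), so the improvement has to come from the smallness of $\cos c$ on the relevant interval; the natural route is induction on $n$ via the three-term recurrence $(n+d-1)P_{n+1}(c)=(2n+d-1)(\cos c)P_n(c)-nP_{n-1}(c)$, combined with the monotonicity of $\lambda_n$.
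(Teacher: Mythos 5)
Your overall strategy---relax to $\widetilde\Psi_d^c$ and then bound $\inf_{\widetilde\Psi_d^c}(-\psi''(0))$ from below by a dual certificate $(\mu,\nu)$ with $\mu+\nu P_n(c)\le\lambda_n$ for all $n$---is sound, and for part~(i) it is complete: the certificate $\mu=-\nu=1/(1-\cos c)$ gives equality at $n=0,1$, and your check for $n\ge2$ (using only $|P_n(c)|\le1$, $\lambda_n\ge\lambda_2=2+2/d>2$, and $1-\cos c\ge1$) is correct. This is a genuinely different route from the paper, which instead restricts to coefficient sequences supported on $\{0,1\}$, exhibits the resulting extremal function $\psi_c$, and asserts that mass concentrated in low-index coefficients minimizes $-\psi''(0)$. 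Your certificate is the dual object that actually proves the lower bound over \emph{all} of $\widetilde\Psi_d^c$, so for part~(i) your argument is, if anything, more airtight than the paper's.

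For part~(ii) there is a genuine gap, and you have named it yourself: dual feasibility for $n\ge4$. Your computation of $(\mu,\nu)$ from equality at $n=1,2$ is correct and reproduces the stated bound, the $n=0$ check is fine, and your $n=3$ polynomial is right (indeed it factors as $\tfrac{d+3}{d}(1-t)^2\bigl((d+2)t+2\bigr)\ge0$, which is cleaner than the derivative argument). But the infinite family $n\ge4$ is only gestured at, and it is not a formality: the crude bound $\mu-\nu\le\lambda_n$ fails badly for large $d$ (at $t=0$ one has $\mu-\nu=d+3$ while $\lambda_n=n(n+d-1)/d\approx n$, so every $n\lesssim d$ escapes it), so one genuinely needs quantitative decay of $P_n(\cos c)$ on $\cos c\in[0,1/\sqrt{d+1}]$, or a worked-out induction via the three-term recurrence, neither of which you supply. (For $d=2$ the crude bound does close the case $n\ge4$, since $\max_{t\in[0,1/\sqrt3]}(\mu-\nu)=3+2\sqrt3<10=\lambda_4$, but the proposition is claimed for all $d>1$.) It is worth noting that the paper's own proof of (ii) carries the mirror image of this gap---it never justifies that the infimum over $\widetilde\Psi_d^c$ is attained among sequences with $b_{n,d}=0$ for $n\ge3$---and by linear-programming duality that assertion is equivalent to the feasibility condition you leave unverified. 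So your proposal correctly isolates the real mathematical content of part~(ii), but does not prove it.
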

\begin{proof}
	It is easy to check \cite{beatson_2013} that $-\psi''(0) = \frac{1}{d} \sum_{n=1}^\infty n(n+d-1)b_{n,d}$ for any $\psi \in \Psi_d$ with associated $d$-Schoenberg coefficients $\{b_{n,d}\}_{n=0}^\infty$. Since the sequence $\{b_{n,d}\}_{n=0}^\infty$ is a probability mass system, functions $\psi$ with mass concentrated in lower index coefficients have a lower value of $-\psi''(0)$, and it is essential for the estimation of the infimum $a_d^c$ in Equation (\ref{curvature}).
	
	The set $\Psi_d^c$ is difficult to tackle, because locally supported functions have an infinite number of non null $d$-Schoenberg coefficients. In view of this, we consider $\Psi_d^c$ as a subset of the more amenable set $\widetilde{\Psi}_d^c := \{ \psi \in \Psi_d : \psi(c)=0 \}$, of functions having at least one zero at the fixed value $\theta = c$. Now, denote 
	\begin{equation*}
	\widetilde{a}_d^c := \inf_{\psi \in \widetilde{\Psi}_d^c} [- \psi''(0)].
	\end{equation*}
	Obviously, we have  $a_d^c \geq \widetilde{a}_d^c$ thanks to (\ref{increasion}), and the latter value is attainable at a known function for a range of values of $c$, as we shall show. In order to get $\widetilde{a}_d^c$ we need to solve the pair of equations
	\begin{equation} \label{eq:eq-infimum}
	\sum_{n=0}^\infty b_{n,d} = 1 \qquad \text{ and } \qquad \sum_{n=0}^\infty b_{n,d} \frac{C_n^\lambda(\cos c)}{C_n^\lambda(1)} = 0
	\end{equation}
	subject to the restriction $\{b_{n,d}\}_{n=0}^\infty \subset [0, \infty)$. As already stated, we shall check the values for functions with mass concentrated into the first coefficients. The constant function (i.e. $b_{n,d} = 0$ for $n \geq 1$) is clearly out of $\widetilde{\Psi}_d^c$. Thus, we check functions with $b_{n,d} = 0$ for $n \geq 2$. Using the equations in (\ref{eq:eq-infimum}) we get the single function
	$$
	\psi_c(\theta) = \frac{-\cos c}{1 - \cos c} + \frac{1}{1 - \cos c} \cos \theta,
	$$
	and a sufficient condition for $\psi$ to belong to the class $\psi \in \widetilde{\Psi}_d^c$ is that $c \in [\pi/2, \pi]$, with $-\psi_c''(0) = \frac{1}{1 - \cos c}$. Hence for $c \in [\pi/2, \pi]$ we have that $\psi_c \in \widetilde{\Psi}_d^c$ leading to $\widetilde{a}_d^c = \frac{1}{1 - \cos c}$.
	
	For $c \in [0, \pi/2]$ we have no members of $\widetilde{\Psi}_d^c$ with $b_{n,d} = 0$ for $n \geq 2$, and we shall look for functions with $b_{n,d} = 0$ for $n \geq 3$. Using again the system (\ref{eq:eq-infimum}) we get the set of functions that can be written as
	\begin{multline*}
	\psi_\beta(\theta) = - \frac{\cos c}{1 - \cos c} + \frac{(d+1) \cos c + 1}{d} \beta \\
	+ \left( \frac{1}{1 - \cos c} - \frac{(d+1)(1+\cos c)}{d} \beta \right) \cos \theta + \beta \frac{(d+1) \cos^2 \theta - 1}{d},
	\end{multline*}
	indexed by a parameter $\beta := b_{2,d}$. The non negativity restriction of their coefficients turns into the inequality
	\begin{equation}\label{Eq 3.8}
	\frac{d \cos c}{(1-\cos c)((d+1)\cos c + 1)} \leq \beta \leq \frac{d}{(d+1)\sin^2 c},
	\end{equation}
	which leads to a non empty set of values only if $c \geq \arccos \sqrt{ \frac{1}{d+1} }$, and $\widetilde{a}_d^c$ is attained for $\psi_\beta$ when $\beta$ attaches to the left-hand side of inequality (\ref{Eq 3.8}). This completes the proof. 
\end{proof}

This strategy might lead to values of $\widetilde{a}_d^c$ for a wider range of values $c$, by using functions with $b_{n,d} = 0$ for $n \geq 4$, and so on, but we have not explored further this line because of the complexity of equations. Another way (yet unexplored) of improving the lower bounds is using slightly more complex auxiliary sets $\Psi_d^{(c,c')}$ of functions having at least two zeros, or even more. We could find no examples of members of this subclass.

Figure \ref{fig:bounds-for-adc} depicts both upper and lower bounds for the range of $c$ in dimension $d=2$.

\begin{figure}[hbp]
	\centering
	\includegraphics[width=4in]{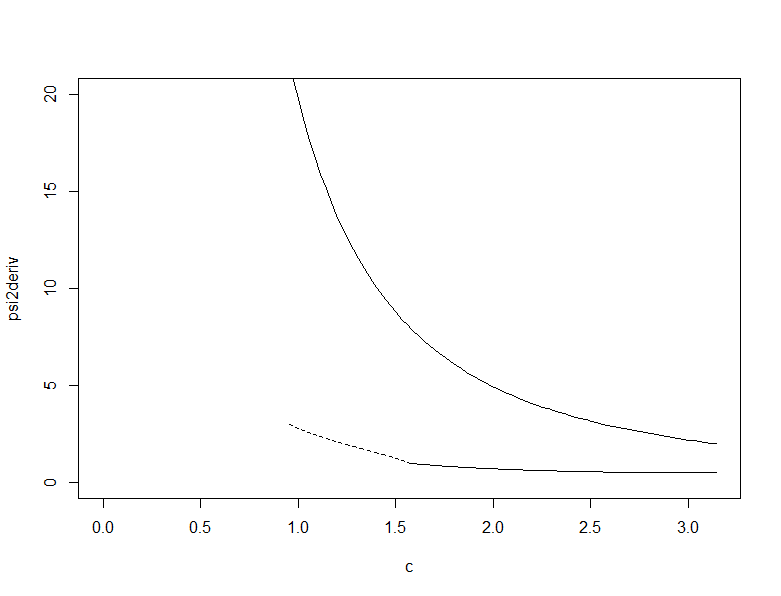}
	\caption{Upper (\cite[Theorem 5.1]{ehm_2004}) and lower (Proposition \ref{prop2}) bounds for $a_d^c$ in the range $c \in [ \arccos \sqrt{\frac{1}{d+1}}, \pi]$ for $d=2$.}
	\label{fig:bounds-for-adc}
\end{figure}

\section{On the $2$-Schoenberg coefficients of some celebrated parametric families}
\label{sec:coefficients-special-families}

This section inspects the problem of giving closed form expressions for the $2$-Schoenberg coefficients of correlation functions in the exponential  and Askey's families \cite{moller_2015}. 

A relevant remark is that what really matters is the computation of the $1$- and $2$-Schoenberg coefficients, because all the others can then be calculated inductively by using Corollary 3 in \cite{gneiting_2013a}. In particular, using Theorem 4.2 in \cite{moller_2015} one can even get the Schoenberg's coefficients related to the representation of a given member of the class $\Psi_{\infty}$. Since the $1$-Schoenberg coefficients for the exponential and Askey families have been provided in \cite{moller_2015}, we focus here on the tricky case of the $2$-Schoenberg coefficients related to these families. It is worth noting that \cite{huang_zhang_robeson_2011} analyzed the validity of several families of covariance functions over the sphere, and provided an explicit formula of the coefficients of the exponencial one. We derive another formula, more suitable for computation, since it is a sum of a finite numbers of terms, in contrast with the formula given in \cite[p.729]{huang_zhang_robeson_2011}.

First, we note that Gegenbauer polynomials simplify to Legendre polynomials $P_n$ when dealing with $\mathbb{S}^2$. Thus, classical Schoenberg's representation reduces to
\begin{equation*}
\psi(\theta)=\sum_{n=0}^\infty b_{n,2} P_n(\cos\theta),  \qquad \theta \in [0,\pi],
\end{equation*}
where
$$
b_{n,2} = \left(n+\frac{1}{2}\right) \int_0^\pi P_n(\cos\theta) \psi(\theta) \sin\theta\D \theta,
$$
for all $n\geq0$. The following representation for Legendre polynomials turns to be useful \cite{dixit2015finite}:
$$
P_n(\cos\theta)=2^n\sum_{m=0}^n \binom{n}{m}\binom{\frac{n+m-1}{2}}{n}(\cos\theta)^m.
$$
In view of the expression above, the $2$-Schoenberg coefficients can be computed through
\begin{equation}\label{Eq:4.2}
b_{n,2} = (2n+1)2^{n-1} \sum_{m=0}^n \binom{n}{m}\binom{\frac{n+m-1}{2}}{n}\int_0^\pi (\cos\theta)^m \sin\theta\psi(\theta)\D \theta.
\end{equation}

\subsection{Exponential Family}

Let us consider the exponential family (included in $\Psi_{\infty}$), given by
\begin{equation}\label{Eq:4.3}
\psi_{\alpha}(\theta)=\exp\left(-\frac{\theta}{\alpha}\right), \qquad \theta \in [0,\pi], 
\end{equation} 
with $\alpha$ being a positive scaling parameter. 
\begin{proposition}\label{lem:4.1} 
	The $2$-Schoenberg coefficients of functions $\psi_{\alpha}$ in Equation (\ref{Eq:4.3}) are given by 
	\begin{align}\label{Eq:4.4}
	\normalfont
	b_{n,2}(\alpha) = \frac{2n+1}{2^{1-n}}\ &\left\{ \sum_{m\equiv 0 (\normalfont\text{mod} 2)}^n \binom {n} {m}  \binom {\frac{n+m-1}{2}} {n} \frac{\left(1+\mathrm{e}^{-\frac{\pi}{\alpha}}\right)}{(m+1)2^m}\cdot\right.\nonumber\\
	&\qquad \left.\left[2^m-\sum_{k=0}^{\frac{m}{2}} \frac{1}{(2k+1)^2\alpha^2+1}\binom{m+1}{\frac{m-2k}{2}}\right]+\right.\nonumber\\
	&\qquad\left.\sum_{m\equiv 1 (\normalfont\text{mod} 2)}^n \binom {n} {m}  \binom {\frac{n+m-1}{2}} {n} \frac{\left(1-\mathrm{e}^{-\frac{\pi}{\alpha}}\right)}{(m+1)2^m}\cdot\right.\nonumber\\
	&\qquad\left.\left[2^m-\frac{1}{2}\binom {m+1}{\frac{m+1}{2}}-\sum_{k=1}^{\frac{m+1}{2}}\frac{1}{4k^2\alpha^2+1}\binom{m+1}{\frac{m-2k+1}{2}}\right]\right\}. 
	\end{align}
\end{proposition}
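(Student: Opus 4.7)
The approach is to start from the integral representation in Equation~\eqref{Eq:4.2}, which writes $b_{n,2}(\alpha)$ as a finite linear combination, with fixed binomial coefficients, of the one-dimensional integrals
\[
I_m(\alpha) := \int_0^\pi (\cos\theta)^m \sin\theta\, \mathrm{e}^{-\theta/\alpha}\D\theta, \qquad m = 0, 1, \ldots, n.
\]
Thus the proposition reduces to producing a closed form for $I_m(\alpha)$ that splits according to the parity of $m$, and substituting the result back into \eqref{Eq:4.2}.

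To compute $I_m(\alpha)$ I would exploit the antiderivative identity $(\cos\theta)^m \sin\theta = -\frac{1}{m+1}\frac{\D}{\D\theta}(\cos\theta)^{m+1}$ and integrate by parts. The boundary contribution evaluates to $\bigl(1-(-1)^{m+1}\mathrm{e}^{-\pi/\alpha}\bigr)/(m+1)$, while the bulk term equals $-\frac{1}{\alpha(m+1)} J_{m+1}(\alpha)$, where $J_N(\alpha) := \int_0^\pi (\cos\theta)^N \mathrm{e}^{-\theta/\alpha}\D\theta$. To evaluate $J_{m+1}(\alpha)$ I would use the Chebyshev-type power-reduction identity expanding $(\cos\theta)^N$ as a finite linear combination of $\cos(j\theta)$ with $j$ of the same parity as $N$, combined with the elementary formula
\[
\int_0^\pi \cos(j\theta)\, \mathrm{e}^{-\theta/\alpha}\D\theta = \frac{\alpha\bigl(1 - (-1)^j \mathrm{e}^{-\pi/\alpha}\bigr)}{1 + j^2\alpha^2},
\]
which follows from two applications of integration by parts.

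The calculation then separates into the two parity cases. When $m$ is even (so $m+1$ is odd), the expansion of $(\cos\theta)^{m+1}$ only involves odd frequencies $j = 2k+1$, forcing $(-1)^j = -1$; this yields the common factor $1+\mathrm{e}^{-\pi/\alpha}$ and denominators $(2k+1)^2\alpha^2 + 1$, and an appropriate reindexing converts the natural binomial into $\binom{m+1}{(m-2k)/2}$, matching the first sum in \eqref{Eq:4.4}. When $m$ is odd, the expansion of $(\cos\theta)^{m+1}$ (with $m+1$ even) contains an isolated constant term proportional to $\binom{m+1}{(m+1)/2}$ together with cosines of even frequencies $j=2k$; this produces the factor $1-\mathrm{e}^{-\pi/\alpha}$, the isolated middle-binomial contribution $\frac{1}{2}\binom{m+1}{(m+1)/2}$, and denominators $4k^2\alpha^2 + 1$, matching the second sum of \eqref{Eq:4.4}. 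Merging the boundary term $1/(m+1)$ with $-J_{m+1}(\alpha)/[\alpha(m+1)]$ yields the overall $\frac{1}{(m+1)2^m}[2^m - \cdots]$ structure displayed in the statement.

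The main obstacle is purely combinatorial bookkeeping: verifying that the index shift used to rewrite the Chebyshev expansion in terms of $\binom{m+1}{(m-2k)/2}$ or $\binom{m+1}{(m-2k+1)/2}$ produces exactly the summation ranges $k = 0, \ldots, m/2$ and $k = 1, \ldots, (m+1)/2$ asserted in the proposition, and that no stray factor of $2$ is lost in merging the boundary and bulk contributions. Everything else is routine algebra.
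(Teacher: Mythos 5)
Your plan is correct and follows essentially the same route as the paper: apply Eq.~\eqref{Eq:4.2}, integrate by parts to reduce $\int_0^\pi \mathrm{e}^{-\theta/\alpha}(\cos\theta)^m\sin\theta\,\D\theta$ to the boundary term $\bigl(1+(-1)^m\mathrm{e}^{-\pi/\alpha}\bigr)/(m+1)$ plus $-\frac{1}{\alpha(m+1)}\int_0^\pi \mathrm{e}^{-\theta/\alpha}(\cos\theta)^{m+1}\D\theta$, and evaluate the latter by parity of $m$; your reindexings, parity factors $1\pm\mathrm{e}^{-\pi/\alpha}$, denominators, and summation ranges all check out against Eq.~\eqref{Eq:4.4}. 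The only (harmless) divergence is that you derive the integrals $\int_0^\pi \mathrm{e}^{-\theta/\alpha}(\cos\theta)^{m+1}\D\theta$ from the power-reduction expansion of $(\cos\theta)^{m+1}$ into cosines of multiple angles, whereas the paper simply quotes the closed forms from a table of integrals, so your version is slightly more self-contained.
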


\begin{proof}
	We provide a proof by direct construction. We first use Equation (\ref{Eq:4.2}) to obtain 
	\begin{equation}\label{Eq:4.5}
	b_{n,2}(\alpha)=(2n+1)2^{n-1} \sum_{m=0}^n \binom{n}{m}\binom{\frac{n+m-1}{2}}{n}\int_0^\pi \mathrm{e}^{\left(-\frac{\theta}{\alpha}\right)}(\cos\theta)^m \sin\theta\D \theta.
	\end{equation}
	Using integration by parts, we have 
	\begin{equation}\label{Eq:4.66}
	\int_0^\pi \mathrm{e}^{\left(-\frac{\theta}{\alpha}\right)}(\cos\theta)^m \sin\theta\D \theta=\left(\mathrm{e}^{-\frac{\pi}{\alpha}}\ (-1)^m+1\right)-\frac{1}{\alpha(m+1)} \int_0^\pi \mathrm{e}^{-\frac{\theta}{\alpha}}(\cos\theta)^{m+1}\D\theta.
	\end{equation}
	To compute the second term of (\ref{Eq:4.66}), we use the explicit formulae proposed in \cite[Page 228]{jeffrey_2007} as follows: when $m$ is even, the integral on the right hand side of (\ref{Eq:4.66}) is given by 
$$
\int_0^\pi \mathrm{e}^{-\frac{\theta}{\alpha}}(\cos\theta)^{m+1}\D\theta=\frac{\left(1+\mathrm{e}^{-\frac{\pi}{\alpha}}\right)}{2^m}\sum_{k=0}^{\frac{m}{2}}\binom{m+1}{\frac{m-2k}{2}}\frac{\alpha}{(2k+1)^2\alpha^2+1}\label{Eq:4.77}
$$
while, for odd $m$, we obtain
\begin{align}
		\int_0^\pi \mathrm{e}^{-\frac{\theta}{\alpha}}(\cos\theta)^{m+1}\D\theta&=
		\binom{m+1}{\frac{m+1}{2}}\frac{\alpha\left(1-\mathrm{e}^{-\frac{\pi}{\alpha}}\right)}{2^{m+1}}\nonumber\\
		&\quad+ \frac{\left(1-\mathrm{e}^{-\frac{\pi}{\alpha}}\right)}{2^m} \sum_{k=1}^{\frac{m+1}{2}}\binom{m+1}{\frac{m-2k+1}{2}}\frac{\alpha}{(2k\alpha)^2+1}.	\label{Eq:4.88}	
\end{align}
We can now merge (\ref{Eq:4.77}) and (\ref{Eq:4.88}) into (\ref{Eq:4.66}) to obtain 
	\begin{align}\label{Eq:4.99}
	\int_0^\pi \mathrm{e}^{\left(-\frac{\theta}{\alpha}\right)}(\cos\theta)^m \sin\theta\D \theta&=\overbrace{\left(1+(-1)^m\mathrm{e}^{-\frac{\pi}{\alpha}}\ \right)}^{\text{general } m} - \overbrace{\frac{\left(1-\mathrm{e}^{-\frac{\pi}{\alpha}}\right)}{(m+1)2^{m+1}} \binom{m+1}{\frac{m+1}{2}}}^{m \text{ is odd}}\nonumber\\
	&-\overbrace{\frac{\left(1+\mathrm{e}^{-\frac{\pi}{\alpha}}\right)}{(m+1)2^m}\sum_{k=0}^{\frac{m}{2}}\binom{m+1}{\frac{m-2k}{2}}\frac{1}{(2k+1)^2\alpha^2+1}}^{m \text{ is even}}\nonumber\\
	&- \underbrace{\frac{\left(1-\mathrm{e}^{-\frac{\pi}{\alpha}}\right)}{2^m} \sum_{k=1}^{\frac{m+1}{2}}\binom{m+1}{\frac{m-2k+1}{2}}\frac{\alpha}{(2k\alpha)^2+1}}_{m \text{ is odd}}
	\end{align}
	Going back by substitution into Equation (\ref{Eq:4.99}) in (\ref{Eq:4.5}), we obtain (\ref{Eq:4.4}). This completes the proof.
\end{proof}

\begin{figure}[hbp]
	\begin{center}
		\begin{tabular}{c}
			\includegraphics[width=0.3\textwidth,height=35mm]{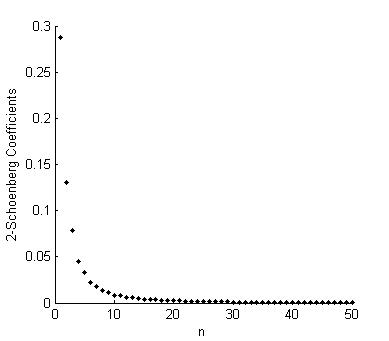}
			\includegraphics[width=0.3\textwidth,height=35mm]{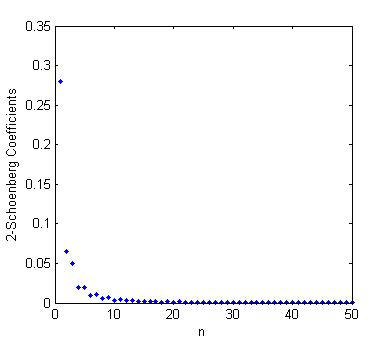}
			\includegraphics[width=0.3\textwidth,height=35mm]{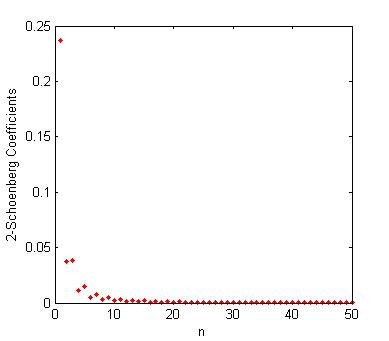}
			\\
		\end{tabular}
		\begin{tabular}{c}
			\includegraphics[width=0.3\textwidth,height=35mm]{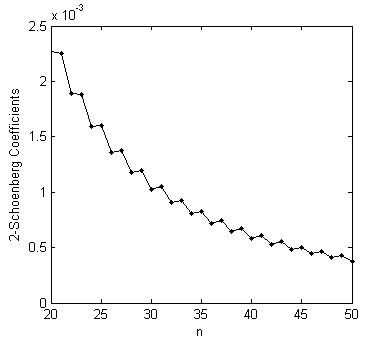}
			\includegraphics[width=0.3\textwidth,height=35mm]{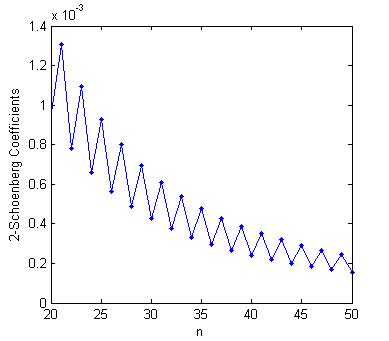}
			\includegraphics[width=0.3\textwidth,height=35mm]{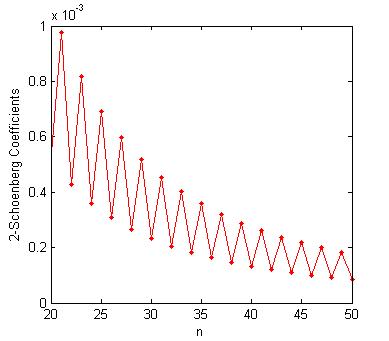}
			\\
		\end{tabular}
	\end{center}
	\caption{
		2-Schoenberg coefficients of the Exponential function $\psi_{\alpha}$ in Equation (\ref{Eq:4.3}), for $\alpha= 1,2,3$ (from left to right). Top: from 1st to 50-th coefficient; bottom: from 20-th to 50-th coefficient, showing the oscillating behavior away from the origin.}\label{fig:cc}
\end{figure}

\subsection{Askey Family}
The Askey function \cite{askey_1973} $\psi_{\alpha,\tau}$, is defined through 
\begin{equation}\label{Eq:4.10}
\psi_{\alpha,\tau}(\theta)=\left(1-\frac{\theta}{\alpha}\right)_+^\tau\quad \text{ for } \theta\in[0,\pi],
\end{equation}
where $\alpha > 0$ and $\tau\geq (d+1)/2$ are sufficient conditions for $\psi_{\alpha,\tau}$ to belong to the class $\Psi_d$. Since we are concerned with the $2$-Schoenberg's coefficients, we consider the case $\tau=2$. A relevant remark is that the Askey function is locally supported when $0<\alpha<\pi$.

\begin{proposition}\label{lem:4.2} 
	The $2$-Schoenberg coefficients related to the members $\psi_{\alpha,2}$ of the class $\Psi_{2}$ as in Equation (\ref{Eq:4.10}) are given by 
	\begin{multline}\label{Eq:4.11}
	b_{n,2}(\alpha) = (2n+1)\ 2^{n-1} \left\{ \sum_{m\equiv 0 (\text{mod} 2)}^n \binom {n} {m}  \binom {\frac{n+m-1}{2}} {n} \left[\frac{1}{m+1}\right.\right.\\
	\left.\left.+\frac{1}{(m+1)\alpha^22^{m-1}}\sum_{k=0}^{\frac{m}{2}}\binom{m+1}{k}\frac{\cos(m-2k+1)\alpha -1}{(m-2k+1)^2}\right]\right.\\
	\left.+\sum_{m\equiv 1 (\text{mod} 2)}^n \binom {n} {m}  \binom {\frac{n+m-1}{2}} {n} \left[\frac{1}{m+1}- \frac{1}{(m+1)2^{m+1}}\binom{m+1}{\frac{m+1}{2}}\right.\right.\\
	\left.\left.+\frac{1}{(m+1)\alpha^22^{m-1}}\sum_{k=0}^{\frac{m-1}{2}}\binom{m+1}{k}\left[\frac{\cos(m-2k+1)\alpha-1}{(m-2k+1)^2} \right]\right]\right\}.
	\end{multline}
\end{proposition}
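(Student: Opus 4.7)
The plan is to obtain the $2$-Schoenberg coefficients by direct computation from the integral representation in Equation~(\ref{Eq:4.2}). Since $\psi_{\alpha,2}(\theta)=(1-\theta/\alpha)_+^2$ vanishes for $\theta\geq\alpha$, the integral defining $b_{n,2}(\alpha)$ (for $\alpha\le\pi$) collapses to a weighted sum of the quantities
$$
I_m(\alpha):=\int_0^\alpha (\cos\theta)^m \sin\theta \left(1-\frac{\theta}{\alpha}\right)^2\D\theta,\qquad m=0,1,\dots,n,
$$
and the bulk of the work is finding a closed form for each $I_m(\alpha)$.

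I would first integrate by parts with $u=(1-\theta/\alpha)^2$ and $\D v=(\cos\theta)^m\sin\theta\,\D\theta$, using the antiderivative $v=-(\cos\theta)^{m+1}/(m+1)$. The boundary term at $\theta=\alpha$ vanishes because of the factor $(1-\theta/\alpha)^2$, while the one at $\theta=0$ contributes $1/(m+1)$. This gives
$$
I_m(\alpha)=\frac{1}{m+1}-\frac{2}{\alpha(m+1)}\int_0^\alpha (\cos\theta)^{m+1}\left(1-\frac{\theta}{\alpha}\right)\D\theta,
$$
in close analogy with the first step used in the proof of Proposition~\ref{lem:4.1}.

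Next, I would apply the classical power-reduction formula to $(\cos\theta)^{m+1}$, expressing it as a linear combination of $\cos(j\theta)$ with $j=m-2k+1$ and weights $\binom{m+1}{k}/2^m$, together with a constant term $\binom{m+1}{(m+1)/2}/2^{m+1}$ that is present only when $m$ is odd. The elementary identity
$$
\int_0^\alpha \cos(j\theta)\left(1-\frac{\theta}{\alpha}\right)\D\theta=\frac{1-\cos(j\alpha)}{\alpha j^2},
$$
which follows from one further integration by parts on $\theta\cos(j\theta)$, supplies the contribution of each oscillatory summand; the constant piece in the odd-$m$ case contributes $\alpha/2$ after integration against $1-\theta/\alpha$. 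Assembling these ingredients delivers $I_m(\alpha)$ split naturally into an even-$m$ and an odd-$m$ expression that already exhibits the shape of the two sums in~(\ref{Eq:4.11}), including the $\cos((m-2k+1)\alpha)-1$ and $(m-2k+1)^2$ factors.

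Finally, I would substitute these closed forms for $I_m(\alpha)$ back into~(\ref{Eq:4.2}), partition the outer sum over $m$ according to parity, and verify termwise agreement with the right-hand side of~(\ref{Eq:4.11}). The main obstacle is purely bookkeeping: keeping the summation ranges ($k=0,\dots,m/2$ versus $k=0,\dots,(m-1)/2$), the index substitution $j\leftrightarrow m-2k+1$, and the powers of~$2$ in harmony with the form stated. No ingredient beyond elementary integration by parts and trigonometric power-reduction should be needed.
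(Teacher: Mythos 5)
Your proposal is correct and reaches the stated formula, but it organizes the computation differently from the paper. The paper expands $(1-\theta/\alpha)^2=1-2\theta/\alpha+\theta^2/\alpha^2$ and evaluates the three integrals $\mathcal{I}_1=\int_0^\alpha(\cos\theta)^m\sin\theta\,\D\theta$, $\mathcal{I}_2=\int_0^\alpha\theta(\cos\theta)^m\sin\theta\,\D\theta$ and $\mathcal{I}_3=\int_0^\alpha\theta^2(\cos\theta)^m\sin\theta\,\D\theta$ separately, each by its own integration by parts followed by table formulas for $\int_0^\alpha(\cos\theta)^{m+1}\D\theta$ and $\int_0^\alpha\theta(\cos\theta)^{m+1}\D\theta$; the many $(\cos\alpha)^{m+1}$ and $\sin((m-2k+1)\alpha)$ terms appearing in the intermediate expressions only cancel at the final recombination. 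You instead perform a single integration by parts with $u=(1-\theta/\alpha)^2$, exploiting the double zero at $\theta=\alpha$ to kill the boundary term, which reduces everything to $\frac{1}{m+1}-\frac{2}{\alpha(m+1)}\int_0^\alpha(\cos\theta)^{m+1}(1-\theta/\alpha)\,\D\theta$, and then you need only the power-reduction of $(\cos\theta)^{m+1}$ together with the elementary identity $\int_0^\alpha\cos(j\theta)(1-\theta/\alpha)\,\D\theta=(1-\cos(j\alpha))/(\alpha j^2)$ and $\int_0^\alpha(1-\theta/\alpha)\,\D\theta=\alpha/2$. Your route is more self-contained (no table look-ups), makes the cancellation of all sine and $(\cos\alpha)^{m+1}$ terms manifest from the outset, and I checked it reproduces~(\ref{Eq:4.11}) exactly, including the minus sign on the $\binom{m+1}{(m+1)/2}$ term in the odd-$m$ block (which, incidentally, appears with the wrong sign in the paper's intermediate Equation~(\ref{Eq:4.22}) though correctly in the final statement). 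The remaining work is, as you say, pure bookkeeping of summation ranges and powers of two.
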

\begin{proof}
	Again, a proof by direct construction is provided. We first use Equation~(\ref{Eq:4.2}) to obtain 
	\begin{align}\label{Eq:4.12}
	b_{n,2}(\alpha)=&\frac{(2n+1)}{2^{1-n}} \sum_{m=0}^n \binom {n} {m}  \binom {\frac{n+m-1}{2}} {n}\int_0^\pi \left(1-\frac{\theta}{\alpha}\right)_+^2 (\cos\theta)^m\sin\theta {\rm d} \theta\nonumber\\
	=&\frac{(2n+1)}{2^{1-n}} \sum_{m=0}^n \binom {n} {m}  \binom {\frac{n+m-1}{2}} {n}\int_0^\alpha \left(1-\frac{2\theta}{\alpha}+\frac{\theta^2}{\alpha^2}\right)(\cos\theta)^m\sin\theta {\rm d} \theta.
	\end{align}
	We now note that
	\begin{multline}\label{Eq:4.13}
	\int_0^\alpha \left(1-\frac{2\theta}{\alpha}+\frac{\theta^2}{\alpha^2}\right)(\cos\theta)^m\sin\theta {\rm d} \theta = \underbrace{\int_0^\alpha(\cos\theta)^m\sin\theta {\rm d} \theta}_{\mathcal{I}_1} \\ - \frac{2}{\alpha}\underbrace{\int_0^\alpha\theta(\cos\theta)^m\sin\theta {\rm d} \theta}_{\mathcal{I}_2}
	+\frac{2}{\alpha^2}\underbrace{\int_0^\alpha\theta^2(\cos\theta)^m\sin\theta {\rm d} \theta}_{\mathcal{I}_3}.
	\end{multline}
	The integral $\mathcal{I}_1$ is given by 
	\begin{equation}\label{Eq:4.14}
	\mathcal{I}_1=\frac{1}{m+1}-\frac{(\cos\alpha)^{m+1}}{m+1}.
	\end{equation}
	Using integration by parts, we obtain that
	\begin{equation*}
	\mathcal{I}_2= \frac{-\alpha}{m+1}(\cos\alpha)^{m+1}+\frac{1}{m+1} \int_0^\alpha(\cos\theta)^{m+1}\D\theta.
	\end{equation*}
	By using the explicit formulas 3 and 4 in \cite[Page 153]{jeffrey_2007}, we have that, when $m$ is even, 
		\begin{align}
		\mathcal{I}_2&=\frac{-\alpha}{m+1}(\cos\alpha)^{m+1} +\frac{1}{(m+1)2^m}\sum_{k=0}^{\frac{m}{2}}\binom{m+1}{k}\frac{\sin(m-2k+1)\alpha}{m-2k+1},\label{Eq:4.16}
		\end{align}
while for odd $m$ 
		\begin{multline}
		\mathcal{I}_2=
		\frac{-\alpha}{m+1}(\cos\alpha)^{m+1}+\frac{\alpha}{(m+1)2^{m+1}}\binom{m+1}{\frac{m+1}{2}} \\
	\qquad+ \frac{1}{(m+1)2^m}\sum_{k=0}^{\frac{m-1}{2}}\binom{m+1}{k}\frac{\sin(m-2k+1)\alpha}{m-2k+1}.\label{Eq:4.17}
		\end{multline}
	To compute the integral $\mathcal{I}_3$, we use integration by parts to get 
	\begin{equation}\label{Eq:4.18}
	\mathcal{I}_3=\frac{-\alpha^2}{m+1}(\cos\alpha)^{m+1}+\frac{2}{m+1}\int_0^\alpha\theta(\cos\theta)^{m+1}\D\theta.
	\end{equation}
	Using the explicit formulas 6 and 7 \cite[Page 215]{jeffrey_2007} to compute the integral in the second term of (\ref{Eq:4.18}). After then substitute in (\ref{Eq:4.18}) to obtain the integral $\mathcal{I}_3$ as follows: when $m$ is even 
		\begin{multline}
		\mathcal{I}_3 = \frac{-\alpha^2}{m+1}(\cos\alpha)^{m+1} +\frac{1}{(m+1)2^{m-1}}\sum_{k=0}^{\frac{m}{2}}\binom{m+1}{k}\binom{m+1}{k} \\
		\times\left[\frac{\alpha\sin(m-2k+1)\alpha}{m-2k+1}+\frac{\cos(m-2k+1)\alpha-1}{(m-2+k)^2} \right].\label{Eq:4.19}
		\end{multline}
and for odd $m$ 
		\begin{multline}
		\mathcal{I}_3=
		\frac{-\alpha^2}{m+1}(\cos\alpha)^{m+1}+\frac{\alpha^2}{(m+1)2^{m+1}}\binom{m+1}{\frac{m+1}{2}} \\
		+ \frac{1}{(m+1)2^{m-1}}\sum_{k=0}^{\frac{m-1}{2}}\binom{m+1}{k}\sum_{k=0}^{\frac{m-1}{2}}\binom{m+1}{k} \\
		\times \left[\frac{\alpha\sin(m-2k+1)\alpha}{m-2k+1}+\frac{\cos(m-2k+1)\alpha-1}{(m-2+k)^2} \right]\label{Eq:4.20}
		\end{multline}
	Then, from (\ref{Eq:4.14}), (\ref{Eq:4.16}), (\ref{Eq:4.17}), (\ref{Eq:4.19}) and (\ref{Eq:4.20}) in (\ref{Eq:4.13}), we have that, for even $m$, 
		\begin{align}
		\mathcal{I}_1-\frac{2\mathcal{I}_2}{\alpha}+\frac{\mathcal{I}_3}{\alpha^2}&=\frac{1}{m+1}+\frac{2^{1-m}}{(m+1)\alpha^2}\sum_{k=0}^{\frac{m}{2}}\binom{m+1}{k}\frac{\cos(m-2k+1)\alpha -1}{(m-2k+1)^2}, \label{Eq:4.21}
		\end{align}
and for odd $m$,		%
		\begin{multline}
		\mathcal{I}_1-\frac{2\mathcal{I}_2}{\alpha}+\frac{\mathcal{I}_3}{\alpha^2} =\frac{1}{m+1}+\frac{1}{(m+1)2^{m+1}}\binom{m+1}{\frac{m+1}{2}} \\
		+ \frac{2^{1-m}}{(m+1)\alpha^2}\sum_{k=0}^{\frac{m-1}{2}}\binom{m+1}{k}\left[\frac{\cos(m-2k+1)\alpha-1}{(m-2k+1)^2}\right].\label{Eq:4.22}
		\end{multline}
	Going back by substitution into Equation (\ref{Eq:4.21}) and (\ref{Eq:4.22}) in (\ref{Eq:4.12}), we obtain (\ref{Eq:4.11}). This completes the proof.
\end{proof}

\begin{figure}[hbp]
	\begin{center}
		\begin{tabular}{c}
			\includegraphics[width=0.3\textwidth,height=35mm]{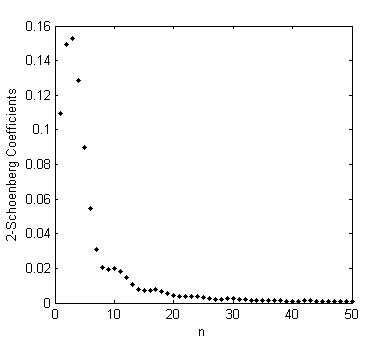}
			\includegraphics[width=0.3\textwidth,,height=35mm]{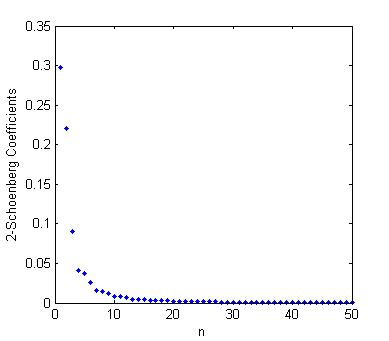}
			\includegraphics[width=0.3\textwidth,,height=35mm]{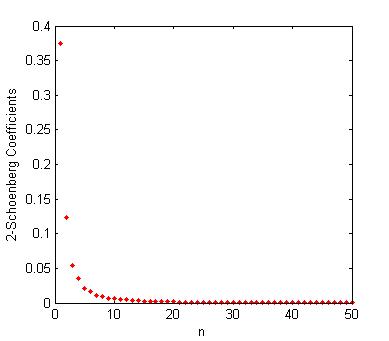} 
			\\
		\end{tabular}
		\begin{tabular}{c}
			\includegraphics[width=0.3\textwidth,height=35mm]{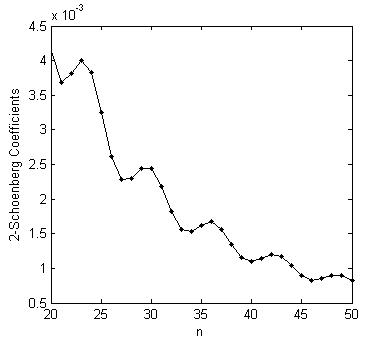}
			\includegraphics[width=0.3\textwidth,,height=35mm]{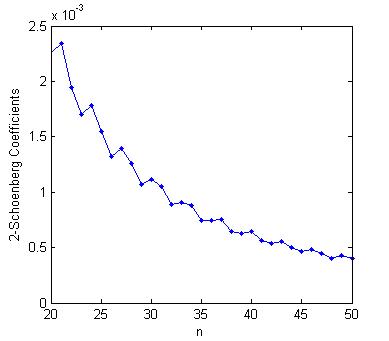}
			\includegraphics[width=0.3\textwidth,,height=35mm]{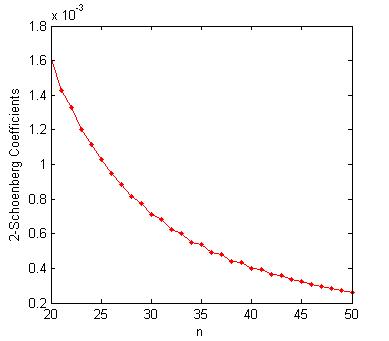} 
			\\
		\end{tabular}
	\end{center}
	\caption{
		2-Schoenberg coefficients of the Askey function $\psi_{\alpha,2}$ in Equation (\ref{Eq:4.10}), for $\alpha= 1,2,3$ (from left to right). Top: from 1st to 50-th coefficient; bottom: from 20-th to 50-th coefficient, showing the oscillating behavior away from the origin.}\label{fig:3}
\end{figure}

\section*{Acknowledgements}

We are indebted to Jochen Fiedler for valuable discussions during the preparation of the manuscript.

Funding: Ahmed Arafat and Pablo Gregori's research are supported by Spanish \emph{Ministerio de Econom\'{\i}a, Industria y Competitividad} (project MTM2016-78917-R) and \emph{Universitat Jaume I de Castell\'{o}n} (project P1$\cdot$1B2015-40). Emilio Porcu is supported by Proyecto Fondecyt number 1170290.

\bibliographystyle{plain}
\bibliography{bibliography}

\end{document}